\documentclass[psamsfonts]{amsart}

\usepackage{amssymb,amsfonts}
\usepackage{amsmath}
\usepackage[all,arc]{xy}
\usepackage{enumerate}
\usepackage{mathrsfs}
\usepackage{mathtools}
\usepackage{cite}
\usepackage[unicode]{hyperref}
\usepackage{bookmark}

\newtheorem{thm}{Theorem}[section]
\newtheorem{theorem}[thm]{Theorem}
\newtheorem{cor}[thm]{Corollary}

\newtheorem{lemma}[thm]{Lemma}

\newtheorem{fact}[thm]{Fact}

\theoremstyle{definition}
\newtheorem{defn}[thm]{Definition}

\newcommand{\defeq}{\mathrel{\mathop:}=}

\newcommand{\Q}{\mathbb{Q}}
\newcommand{\PP}{\mathbb{P}}

\newcommand{\LL}{\mathbb{L}}

\newcommand{\HH}{\mathbb{H}}
\newcommand{\FF}{\mathbb{F}}

\newcommand{\la}{\lambda}
\newcommand{\ka}{\kappa}
\newcommand{\w}{\omega}

\newcommand{\mc}{\mathcal}

\DeclareMathOperator{\cf}{cf}

\DeclareMathOperator{\Coll}{Col}

\DeclareTextCommand{\textaleph}{L8U}{ℵ}

\newcommand{\forces}{\Vdash}
\newcommand{\cat}{^\smallfrown}

\makeatletter
\let\c@equation\c@thm
\makeatother
\numberwithin{equation}{section}

\bibliographystyle{plain}

\title{Tree Properties at Successors of Singulars of Many Cofinalities}
\author{William Adkisson}
\begin{document}

\begin{abstract}
 From many supercompact cardinals, we show that it is consistent for the tree property to hold at many small successors of singular cardinals, each with a different cofinality. In particular, we construct a model in which the tree property holds at $\aleph_{\w+\w+1}$ and at $\aleph_{\w_n+1}$ for all $0<n<\w$. We show that this can be done for the strong tree property as well, and extend the technique to large uncountable sequences of desired cofinalities.
\end{abstract}

	\maketitle

\section{Introduction}

The tree property is a generalization of K\"onig's Lemma to uncountable cardinals, and is closely related to large cardinals. In particular, the tree property holds at an inaccessible cardinal $\ka$ if and only if $\ka$ is weakly compact. The tree property can consistently hold at small cardinals, but it retains some large cardinal strength: Mitchell and Silver \cite{mitchell:tp} showed that the tree property at $\aleph_2$ is equiconsistent with a weakly compact.

In the 70s, Magidor asked whether it was consistent for every regular cardinal greater than $\aleph_1$ to have the tree property. Since then, set theorists have constructed models where the tree property holds at more and more successive cardinals; see for instance \cite{abraham:treepropN2N3}, \cite{CF_TreeProp}, \cite{MS_TPSuccSing}, and \cite{NeemanTPNw+1}.

More recently, there has been a resurgence of interest in strengthenings of the tree property that are linked with more powerful large cardinals. The strong tree property is a generalization of the tree property that holds at an inaccessible cardinal $\ka$ if and only if $\ka$ is strongly compact \cite{jech:combinatorialprobs}. 
Like the tree property, 
this property can consistently hold at small cardinals, and is viewed as strong evidence for the presence of a strongly compact cardinal.

Magidor's question can be extended to these stronger properties, and in the past decade many results about the tree property have been generalized. See for instance \cite{fontanella:CF}, \cite{UngerCF}, \cite{HachtmanITPNw+1}, and \cite{adkisson:ITP}.

If the tree property and its strengthenings are to hold at every regular cardinal, they will need to hold at many successors of singular cardinals. In particular, they will need to hold at successors of singulars of many different cofinalities. It is easy to arrange this situation at the level of large cardinals: Magidor and Shelah \cite{MS_TPSuccSing} proved that the successor of a limit of supercompacts always has the tree property, so a long enough sequence of supercompacts will give rise to many such limits of different cofinalities.
Obtaining this result at small cardinals is much more difficult. In \cite{adkisson:ITP}, the author forced the strong tree property at finitely many successors of singulars with different cofinalities simultaneously. 
Unfortunately, the techniques used in that paper are fundamentally restricted to finite sequences of cofinalities, and cannot be easily generalized.

In this paper we present a different technique that can be used to obtain the tree property and the strong tree property at successors of singulars of infinitely many cofinalities simultaneously. In particular, we will obtain the strong tree property at $\aleph_{\w+\w+1}$ along with $\aleph_{\w_n+1}$ for all $0< n < \w$.

In Section \ref{section:lemmas}, we define the strong tree property and list a number of standard lemmas that we will use. In Section \ref{section:TP} we will obtain the tree property at $\aleph_{\w+\w+1}$ and each $\aleph_{\w_n+1}$. In Section \ref{section:sTP}, we prove the same result for the strong tree property. Section \ref{s:extending} extends these results to uncountably many successors of singulars of different cofinalities.

\section{Preliminaries and Branch Lemmas}\label{section:lemmas}

First, let us define the tree property and strong tree property.

\begin{defn}
	A tree $T$ is a \emph{$\ka$-tree} if it has height $\ka$ and levels of size $<\ka$. A cardinal $\ka$ has the \emph{tree property} if every $\ka$-tree has a cofinal branch.
\end{defn}

The strong tree property is concerned with a more general class of objects, called $\mc{P}_\ka(\la)$-lists.

\begin{defn}
	A sequence $d = \langle d_z \mid z\in \mc{P}_\ka(\la)\rangle$ is a $\mc{P}_\ka(\la)$-list if $d_z\subseteq z$ for all $z \in \mc{P}_\ka(\la)$.
\end{defn}
\begin{defn}
	The $z$-th level of a $\mc{P}_\ka(\la)$-list, written $L_z$, is defined as
	\[L_z \defeq \{d_y\cap z \mid y \supseteq z\}.\]
	A $\mc{P}_\ka(\la)$-list is \emph{thin} if $|L_z|<\ka$ for all $z\in \mc{P}_\ka(\la)$.
\end{defn}
\begin{defn}
	A subset $b\subseteq \la$ is a \emph{cofinal branch} through a $\mc{P}_\ka(\la)$-list $d$ if $b\cap z \in L_z$ for all $z\in \mc{P}_\ka(\la)$.
\end{defn}
\begin{defn}
	The \emph{strong tree property} holds at $\ka$ if for all $\la > \ka$, every thin $\mc{P}_\ka(\la)$-list has a cofinal branch.
\end{defn}
\begin{fact}\cite[Lemma 3.4]{fontanella:stpsuccsing}\label{fact:tpgoesdown}
	Suppose $\la < \la'$. If every thin $\mc{P}_\ka(\la')$-list has a cofinal branch, then so does every thin $\mc{P}_\ka(\la)$-list.
\end{fact}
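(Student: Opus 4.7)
The plan is to reduce the existence of a cofinal branch through a thin $\mathcal{P}_\ka(\la)$-list to the existence of a cofinal branch through a naturally constructed thin $\mathcal{P}_\ka(\la')$-list, and then restrict back down. Given a thin $\mathcal{P}_\ka(\la)$-list $d = \langle d_z : z \in \mathcal{P}_\ka(\la)\rangle$, I would define a $\mathcal{P}_\ka(\la')$-list $d'$ by
\[
	d'_{z'} \defeq d_{z'\cap\la}
	\qquad \text{for } z' \in \mathcal{P}_\ka(\la').
\]
Then $d'_{z'} \subseteq z'\cap\la \subseteq z'$, so $d'$ is indeed a $\mathcal{P}_\ka(\la')$-list. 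The point of this definition is that the $\la$-part of $z'$ carries all the information.

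Next I would verify thinness. For $z' \in \mathcal{P}_\ka(\la')$, every element of the level $L'_{z'}$ has the form $d'_{y'} \cap z' = d_{y'\cap\la} \cap (z'\cap\la)$ for some $y' \supseteq z'$. Since $y'\cap\la \supseteq z'\cap\la$, each such set lies in $L_{z'\cap\la}$, giving $|L'_{z'}| \leq |L_{z'\cap\la}| < \ka$. So $d'$ is thin.

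By hypothesis, there is a cofinal branch $b' \subseteq \la'$ through $d'$. I would set $b \defeq b'\cap\la$ and verify that $b$ is a cofinal branch through $d$. For any $z \in \mathcal{P}_\ka(\la)$, view $z$ as an element of $\mathcal{P}_\ka(\la')$ (since $\la < \la'$); then $b'\cap z \in L'_z$, so there is some $y' \supseteq z$ with $b'\cap z = d'_{y'} \cap z = d_{y'\cap\la}\cap z$. Because $z \subseteq \la$ we have $b\cap z = b'\cap z$, and $y \defeq y'\cap\la \supseteq z$ witnesses $b\cap z \in L_z$. Hence $b$ is the desired cofinal branch.

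There is no real obstacle here; the only thing to watch is that the definition of level uses supersets in the relevant $\mathcal{P}_\ka$, and that $y'\cap\la$ continues to contain $z$ when we restrict, which follows from $z\subseteq\la$. The construction is essentially a pullback of the list along the map $z' \mapsto z'\cap\la$, and both thinness and the branch property transfer cleanly along this map.
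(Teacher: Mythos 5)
Your argument is correct: pulling the list back along $z'\mapsto z'\cap\la$, checking thinness via $L'_{z'}\subseteq L_{z'\cap\la}$, and restricting the resulting branch to $\la$ is exactly the standard proof of this fact. The paper itself gives no proof (it cites Fontanella), so there is nothing to compare against, but your write-up is a complete and correct self-contained verification.
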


In the specific case where $\ka = \la$, it is often convenient to restrict to elements of $\ka$ rather than of $\mc{P}_\ka(\ka)$.

\begin{defn}
	Let $\ka$ be a regular cardinal. A sequence $d = \langle d_\alpha \mid \alpha < \ka\rangle$ is a \emph{$\ka$-list} if $d_\alpha \subseteq \alpha$ for all $\alpha < \ka$. The \emph{$\alpha$-th level}, denoted $L_\alpha$, is the set $\{d_\beta \cap \alpha \mid \alpha < \beta < \ka\}$. A $\ka$-list is \emph{thin} if every level has size $<\ka$. A set $b \subseteq \ka$ is an \emph{cofinal branch} through a $\kappa$-list $d$ if $b\cap \alpha \in L_\alpha$ for all $\alpha < \ka$.
\end{defn}

Since $\ka$ is unbounded in $\mc{P}_\ka(\ka)$, we have the following easy facts:
\begin{fact}\label{fact:ka-lists(ka,ka)-lists}
If every thin $\ka$-list has a cofinal branch, then every thin $\mc{P}_\ka(\ka)$-list has a cofinal branch.
\end{fact}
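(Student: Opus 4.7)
The plan is to transfer a thin $\mc{P}_\ka(\ka)$-list to a thin $\ka$-list, apply the hypothesis, and pull the resulting branch back. The key observation is that the ordinals below $\ka$ form a cofinal subset of $\mc{P}_\ka(\ka)$ under $\subseteq$: any $z \in \mc{P}_\ka(\ka)$ satisfies $z \subseteq \sup(z)+1 < \ka$ by the regularity of $\ka$. So ordinals $\alpha < \ka$ (viewed as elements of $\mc{P}_\ka(\ka)$) provide enough information to recover the entire list.

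Concretely, given a thin $\mc{P}_\ka(\ka)$-list $d = \langle d_z \mid z \in \mc{P}_\ka(\ka)\rangle$, I would define a $\ka$-list $d' = \langle d'_\alpha \mid \alpha < \ka\rangle$ by $d'_\alpha \defeq d_\alpha$, where we interpret the ordinal $\alpha$ as the element $\{\gamma : \gamma < \alpha\}$ of $\mc{P}_\ka(\ka)$. This is a $\ka$-list since $d_\alpha \subseteq \alpha$. It is thin: its $\alpha$-th level is $\{d_\beta \cap \alpha \mid \alpha < \beta < \ka\}$, which is contained in the original level $L_\alpha = \{d_y \cap \alpha \mid y \supseteq \alpha\}$ (because $\alpha < \beta$ implies $\alpha \subseteq \beta$ as ordinals), and so has size $<\ka$.

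By hypothesis $d'$ has a cofinal branch $b \subseteq \ka$. To verify $b$ is also a branch through $d$, fix an arbitrary $z \in \mc{P}_\ka(\ka)$ and, by the cofinality observation, pick $\alpha < \ka$ with $z \subseteq \alpha$. Since $b \cap \alpha \in L'_\alpha$, choose $\beta$ with $\alpha < \beta < \ka$ such that $b \cap \alpha = d_\beta \cap \alpha$. Then $b \cap z = (b \cap \alpha) \cap z = d_\beta \cap z$, and since $z \subseteq \alpha \subseteq \beta$ we have $\beta \supseteq z$, so $b \cap z \in L_z$. There is no real obstacle here; the entire content is the cofinality of the ordinals in $\mc{P}_\ka(\ka)$ under $\subseteq$ together with the fact that restricting a thin list to this cofinal set preserves both thinness and enough level data to reconstruct branches.
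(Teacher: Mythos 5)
Your proof is correct and is exactly the argument the paper intends: the paper leaves this as an ``easy fact'' following from the unboundedness of the ordinals in $\mc{P}_\ka(\ka)$, and its companion Fact~\ref{fact:ka-lists2} records precisely your restriction-to-ordinal-indices construction. Nothing to add.
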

\begin{fact}\label{fact:ka-lists2}
	Let $d = \langle d_z\mid z\in \mc{P}_\ka(\ka)\rangle$ be a thin $\mc{P}_{\ka}(\ka)$-list. Let $d^*$ be the restriction of $d$ to elements indexed by ordinals; that is, $d^* = \langle d_\alpha \mid \alpha < \ka\rangle$. Then $d^*$ is a thin $\ka$-list, and if $d^*$ has a cofinal branch, then so does $d$.
\end{fact}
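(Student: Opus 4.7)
The plan is to verify the two claims essentially by direct unfolding of the definitions, relying on the identification of an ordinal $\alpha<\ka$ with the set of its predecessors (so $\alpha\in\mc{P}_\ka(\ka)$ whenever $\alpha<\ka$, assuming $\ka$ regular).

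First I would verify that $d^*$ is a thin $\ka$-list. For each $\alpha<\ka$, we have $\alpha\subseteq\ka$ with $|\alpha|<\ka$, so $\alpha\in\mc{P}_\ka(\ka)$ and $d_\alpha\subseteq\alpha$ is well-defined. For thinness, let $L^*_\alpha=\{d_\beta\cap\alpha\mid \alpha<\beta<\ka\}$ be the $\alpha$-th level of $d^*$ and $L_\alpha=\{d_y\cap\alpha\mid y\supseteq\alpha\}$ the $\alpha$-th level of $d$. Since $\alpha<\beta<\ka$ implies $\beta\supseteq\alpha$ as sets of ordinals, every element of $L^*_\alpha$ appears in $L_\alpha$, so $|L^*_\alpha|\le|L_\alpha|<\ka$.

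Next I would convert a cofinal branch $b^*$ of $d^*$ into a cofinal branch of $d$. I claim $b^*$ itself works. Given any $z\in\mc{P}_\ka(\ka)$, since $|z|<\ka$ and $\ka$ is regular, $\sup(z)<\ka$, so we can pick an ordinal $\alpha<\ka$ with $z\subseteq\alpha$. By assumption on $b^*$, there is some $\beta$ with $\alpha<\beta<\ka$ and $b^*\cap\alpha=d_\beta\cap\alpha$. Then $\beta\supseteq\alpha\supseteq z$, and intersecting the equation $b^*\cap\alpha=d_\beta\cap\alpha$ with $z$ gives $b^*\cap z=d_\beta\cap z$, which lies in $L_z$. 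Hence $b^*$ is a cofinal branch through $d$.

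There is no real obstacle here; the only thing to be careful about is the standard identification of the ordinal $\alpha$ with its set of predecessors, which is what lets $\alpha<\beta$ be read as $\alpha\subseteq\beta$ and allows an ordinal $\alpha\ge\sup(z)+1$ to serve as an upper bound for $z$ in $\mc{P}_\ka(\ka)$.
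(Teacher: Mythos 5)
Your proof is correct and is exactly the argument the paper has in mind: the paper states this as an easy fact justified only by the remark that the ordinals below $\ka$ are unbounded in $(\mc{P}_\ka(\ka),\subseteq)$, which is precisely the observation you use (via $z\subseteq\alpha\subseteq\beta$) to transfer the branch. Your write-up simply fills in the routine details, including the correct caveat that regularity of $\ka$ is what makes $\sup(z)<\ka$.
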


%

We will make heavy use of the following general lemma, a slight weakening of \cite[Theorem 3.10]{NeemanTPNw+1}, for obtaining the tree property at successors of singulars.


\begin{lemma}\label{lem:neemantp}
	Let $\tau$ be a regular cardinal. Let $\langle \ka_\rho \mid \rho < \tau\rangle$ be a continuous increasing sequence of regular cardinals above $\tau$, with supremum $\nu$. Let $I \subseteq \ka_0$, and fix $\rho' < \tau$. For each $\mu \in I$, let $\LL_\mu$ be a forcing poset of size $\leq \ka_{\rho'}$. Suppose the following hold:
	\begin{enumerate}
		\item For each $\mu \in I$, $\LL_\mu$ is the product of forcings $\PP_\mu$ and $\Q_\mu$, where $|\PP_\mu|< \mu^+$ and $\Q_\mu$ is $\mu^+$-closed.
		\item $\ka_0$ is $\nu^+$-supercompact, with a normal measure $U_0$ on $\mc{P}_{\ka_0}(\nu^+)$ and a corresponding embedding $i$ such that $\nu \in i(I)$.
		\item For all ordinals $\rho < \tau$ there is a generic $\nu^+$-supercompactness embedding $j_{\rho+2}$ with domain $V$ and critical point $\ka_{\rho+2}$, added by a poset $\FF$ whose full support power $\FF^{\ka_\rho}$ is $<\ka_\rho$-distributive in $V$.
	\end{enumerate}
	Then there exists $\mu \in I$ such that in the extension of $V$ by $\LL_\mu$, the tree property holds at $\nu^+$.
\end{lemma}
Note that given any thin $\nu^+$-list $d$, we can build a corresponding $\nu^+$-tree: nodes are all sets of the form $d_\beta\cap \alpha$ for $\alpha \leq \beta< \nu^+$, and the tree relation is inclusion. The levels of this tree are precisely the levels of the original list, and any cofinal branch through this tree is also a cofinal branch through the list. So we immediately obtain the following lemma, which we will use to obtain cofinal branches through thin lists. (In fact, the conclusions of the two lemmas are equivalent.)
\begin{lemma}\label{lem:strtp}
	Under the same hypotheses as the previous lemma, there is $\mu \in I$ such that in the extension of $V$ by $\LL_\mu$, every thin $\nu^+$-list has a cofinal branch.
\end{lemma}

A key tool used to work with cofinal branches through trees and thin lists is the (thin) $\ka$-approximation property.

\begin{defn}\label{def:approxed}
	Let $\ka$ be regular, $\la$ be an ordinal, and $\PP$ be a forcing notion in a model $V$. A $\PP$-name $\dot{b}$ for a subset of $\la$ is \emph{$\ka$-approximated by $\PP$ over $V$} if for all $z \in (\mc{P}_{\ka}(\la))^V$, $\forces_\PP \dot{b}\cap z \in V$.
	
	A $\PP$-name $\dot{b}$ for a subset of $\la$ is \emph{thinly $\ka$-approximated by $\PP$ over $V$} if it is $\ka$-approximated by $\PP$ over $V$, and furthermore for every $z \in (\mc{P}_{\ka}(\la))^V$, $|\{x \in V \mid \exists p\in \PP \ p \forces_\PP x = \dot{b}\cap z\}|<\ka$.
\end{defn}

\begin{defn}\label{def:ka-approx}
	Let $\ka$ be regular. A forcing $\PP$ has the \emph{$\ka$-approximation property} over a model $V$ if for every ordinal $\la$ and $\PP$-name $\dot{b}$ for a subset of $\la$, if $\dot{b}$ is $\ka$-approximated by $\PP$ over $V$, then $\forces_{\PP}\dot{b} \in V$.
	
	A forcing $\PP$ has the \emph{thin $\ka$-approximation property} over $V$ if for every ordinal $\la$ and every $\PP$-name $\dot{b}$ for a subset of $\la$, if $\dot{b}$ is thinly $\ka$-approximated by $\PP$ over $V$, then $\forces_{\PP}\dot{b} \in V$.
\end{defn}

The $\ka$-approximation property implies the thin $\ka$-approximation property. 

A cofinal branch through a thin $\mc{P}_\ka(\la)$ list is always thinly approximated over $V$, by any forcing. A similar argument shows that branches through a $\ka$-tree are thinly approximated by any forcing; see \cite[Section 2]{UngerAtrees} for details.

\begin{lemma}\label{lem:branchapprox}
	Let $d$ be a thin $\mc{P}_\ka(\la)$ list in $V$, and let $\mathbb{P}$ be a notion of forcing over $V$. Suppose $\dot{b}$ is a $\PP$-name for a cofinal branch through this list. Then $\dot{b}$ is thinly $\ka$-approximated by $\PP$ over $V$.
\end{lemma}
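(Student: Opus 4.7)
The plan is to verify both clauses of the definition of thin $\ka$-approximation directly, using the defining property of a cofinal branch together with the thinness hypothesis. The central observation is that for every $z \in (\mc{P}_\ka(\la))^V$, the level $L_z = \{d_y \cap z \mid y \supseteq z\}$ is computed from $d$ in $V$, so $L_z \in V$ and $|L_z| < \ka$.

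First I would establish ordinary $\ka$-approximation. Fix any $z \in (\mc{P}_\ka(\la))^V$ and let $G$ be $\PP$-generic over $V$. Since $\dot{b}[G]$ is a cofinal branch through $d$ in $V[G]$, the definition of cofinal branch yields $\dot{b}[G] \cap z \in L_z$. Because $L_z \in V$, this gives $\dot{b}[G] \cap z \in V$, and as $G$ was arbitrary, $\forces_\PP \dot{b} \cap z \in V$.

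For the thinness clause, set $X_z = \{x \in V \mid \exists p \in \PP,\ p \forces_\PP x = \dot{b} \cap z\}$. Given any $x \in X_z$ witnessed by a condition $p$, extend $p$ to a $\PP$-generic $G$ over $V$ and apply the previous paragraph to conclude $x = (\dot{b} \cap z)^{V[G]} \in L_z$. Thus $X_z \subseteq L_z$, and the thinness of $d$ gives $|X_z| \le |L_z| < \ka$, completing the verification.

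The only real subtlety, and it is hardly one, is to remember that $L_z$ is defined in $V$ from the ground-model list $d$ and is therefore absolute between $V$ and $V[G]$; once that is noted, the lemma reduces to a direct reading of the definitions, which is precisely why the conclusion holds for an arbitrary forcing $\PP$.
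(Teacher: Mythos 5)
Your proof is correct and follows essentially the same route as the paper's: the branch condition forces $\dot{b}\cap z$ into the ground-model level $L_z$, which gives both membership in $V$ and, by thinness, fewer than $\ka$ possible values. The paper's version is a compressed form of exactly this argument, so there is nothing further to add.
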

\begin{proof}
	Let $\dot{b}$ be a name for an cofinal branch, and let $z \in (\mc{P}_\ka(\la))^V$. Since $\dot{b}$ is forced to be cofinal, it must meet every level; that is, the empty condition forces that $\dot{b} \cap z \in L_z$, for all $z \in \mc{P}_\ka(\la)$. Since $L_z$ is in $V$, $\dot{b}\cap z$ is likewise forced to be in $V$. Since the list is thin, $|L_z| < \ka$, and so there are fewer than $\ka$ possibilities for $\dot{b}\cap z$.
\end{proof}

Many standard branch lemmas for trees can be generalized to use the (thin) approximation property. We will use of following:

\begin{lemma}\label{lem:approxprop}\cite[Lemma 2.4]{UngerAtrees}
	Let $\ka$ be a regular cardinal. Suppose that $\PP$ is a forcing poset such that $\PP\times \PP$ is $\ka$-cc. Then $\PP$ has the $\ka$-approximation property.
\end{lemma}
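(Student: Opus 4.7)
The plan is to argue by contradiction, exploiting the fact that $\PP \times \PP$ naturally produces two mutually generic filters for $\PP$ over $V$. Suppose $\dot{b}$ is a $\PP$-name for a subset of some ordinal $\la$ that is $\ka$-approximated by $\PP$ over $V$, and suppose some condition $p$ forces $\dot{b} \notin V$. In the forcing $\PP \times \PP$, let $\dot{b}_0, \dot{b}_1$ be the names obtained by reinterpreting $\dot{b}$ using, respectively, the left and right coordinate generics. The strategy is to isolate a small $V$-set $A$ of ordinals which must contain any disagreement between $\dot{b}_0$ and $\dot{b}_1$, and then use $\ka$-approximation on $A$ to force a contradictory agreement.

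First I would verify that $(p,p) \forces_{\PP \times \PP} \dot{b}_0 \neq \dot{b}_1$. If some $(q_0,q_1) \leq (p,p)$ forced equality, then taking a $\PP \times \PP$-generic $G_0 \times G_1$ through $(q_0,q_1)$ would give $\dot{b}^{G_0} = \dot{b}^{G_1}$, and by the standard mutual genericity fact $V[G_0] \cap V[G_1] = V$ this common value would lie in $V$, contradicting $p \forces \dot{b} \notin V$. Having established disagreement, let $\dot{\alpha}$ be a $\PP \times \PP$-name for the least ordinal below $\la$ on which $\dot{b}_0$ and $\dot{b}_1$ differ, which $(p,p)$ forces to exist.

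Next I would use the $\ka$-cc of $\PP \times \PP$ to bound the possibilities for $\dot{\alpha}$. Let
\[
A = \bigl\{\beta < \la \,\big|\, \exists (q_0,q_1) \leq (p,p),\ (q_0,q_1) \forces \dot{\alpha} = \beta\bigr\}.
\]
If $|A| \geq \ka$, picking one witness for each $\beta \in A$ gives an antichain of size $\ka$ in $\PP \times \PP$, contradicting $\ka$-cc. Hence $A \in (\mc{P}_\ka(\la))^V$ and $(p,p) \forces \dot{\alpha} \in A$. By the $\ka$-approximation hypothesis, $\dot{b} \cap A$ is forced to lie in $V$, so I can extend to some $p' \leq p$ and fix $c \in V$ with $p' \forces_\PP \dot{b} \cap A = c$. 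But then $(p',p')$ simultaneously forces $\dot{b}_0 \cap A = c = \dot{b}_1 \cap A$ and $\dot{\alpha} \in A$ with $\dot{b}_0(\dot{\alpha}) \neq \dot{b}_1(\dot{\alpha})$, a contradiction.

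The main conceptual obstacle is the very first step: seeing that one should pass to the product forcing in order to convert the statement $\dot{b} \notin V$ into a genuine separation $\dot{b}_0 \neq \dot{b}_1$ to which $\ka$-cc can be applied. The mutual genericity fact $V[G_0] \cap V[G_1] = V$ underpinning this is standard, and the remaining $\ka$-cc antichain argument and final approximation step are routine, so I expect no serious technical difficulty beyond setting up the product-forcing framework correctly.
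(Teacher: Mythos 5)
Your proof is correct. The paper does not prove this lemma at all --- it is quoted directly from Unger's work (\cite[Lemma 2.4]{UngerAtrees}) --- so there is no in-paper argument to compare against; your mutual-genericity argument (forcing $\dot{b}_0 \neq \dot{b}_1$ over $\PP\times\PP$, bounding the least point of disagreement by a $\ka$-cc antichain argument to get $A \in (\mc{P}_\ka(\la))^V$, and then contradicting the approximation of $\dot{b}\cap A$) is the standard and complete proof of this result.
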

\begin{cor}\label{cor:smallbranch}
	Let $\ka$ be a regular cardinal. Suppose that $\PP$ is a forcing poset with $|\PP| < \ka$. Then $\PP$ has the $\ka$-approximation property.
\end{cor}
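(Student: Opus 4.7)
The plan is to derive this corollary as an immediate specialization of Lemma \ref{lem:approxprop}. That lemma reduces the $\ka$-approximation property to verifying that $\PP \times \PP$ is $\ka$-cc, so my only job is to check this chain condition for a poset of size strictly less than $\ka$. Since $\ka$ is a regular (hence infinite) cardinal and $|\PP| < \ka$, cardinal arithmetic gives $|\PP \times \PP| \le \max(|\PP|, \aleph_0) < \ka$, with the bound being trivial if $\PP$ is finite. In particular every antichain in $\PP \times \PP$ has cardinality at most $|\PP \times \PP| < \ka$, so $\PP \times \PP$ is $\ka$-cc essentially by counting.

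Having verified the hypothesis of Lemma \ref{lem:approxprop}, I simply invoke that lemma to conclude that $\PP$ has the $\ka$-approximation property. There is no substantive obstacle here: the corollary is really just a convenient cardinality-based restatement of the preceding lemma, and the proof should be a single short sentence in the final text.
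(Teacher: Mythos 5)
Your proposal is correct and matches the paper's intended derivation: the corollary is stated immediately after Lemma \ref{lem:approxprop} precisely because $|\PP|<\ka$ gives $|\PP\times\PP|<\ka$, so every antichain in $\PP\times\PP$ has size $<\ka$ and the lemma applies. No further comment is needed.
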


In particular, we will make key use of the following lemma, which generalizes  \cite[Theorem 2.1]{MS_TPSuccSing}. The original lemma shows that these forcings can't add branches to trees; the approximation property is needed to obtain the strong tree property in Section \ref{section:sTP}.

\begin{lemma}\label{lem:cc+closed=approx}\cite[Lemma 2.20]{adkisson:ITP}
	Suppose $\nu$ is a singular strong limit cardinal with cofinality $\tau$. Let $\Q$ be a $\mu^+$-closed forcing over a model $V$ for some $\mu<\nu$ with $\tau \leq \mu$, and let $\PP\in V$ be a forcing poset with $|\PP|\leq \mu$. Then $\Q$ has the thin $\nu^+$-approximation property in the generic extension of $V$ by $\PP$.
\end{lemma}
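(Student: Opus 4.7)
The plan is to argue by contradiction: suppose $\dot b$ is a $\Q$-name in $V[G]$ for a subset of some ordinal $\lambda$ that is $\nu^+$-approximated by $\Q$ over $V[G]$, but some $q^* \in \Q$ forces $\dot b \notin V[G]$. Lifting to $V$, I view $\dot b$ as arising from a $(\PP \times \Q)$-name $\dot B$ and pick $p^* \in \PP$ with $(p^*, q^*) \Vdash_{\PP \times \Q} \dot B \notin V[\dot G_\PP]$. The point of working in $V$ rather than $V[G]$ is that $\Q$ is genuinely $\mu^+$-closed there, whereas in $V[G]$ it may only be $\mu^+$-distributive (by Easton's lemma, since $|\PP|\le\mu$).

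The central construction is a binary splitting tree. By recursion on $s \in 2^{<\mu^+}$, build $q_s \in \Q$ (with $q_\emptyset = q^*$ and $q_{s\frown i} \leq q_s$) and ordinals $\alpha_s < \lambda$ so that $(p^*, q_{s\frown 0}) \Vdash \alpha_s \in \dot B$ while $(p^*, q_{s\frown 1}) \Vdash \alpha_s \notin \dot B$. The successor step works because otherwise every ordinal below $(p^*, q_s)$ would be forced into or out of $\dot B$ uniformly; reading off those decisions would produce $x \in V$ with $(p^*, q_s) \Vdash \dot B = \check x$, contradicting our standing assumption. At limit stages of cofinality $\leq \mu$, the sequence $\langle q_{s\uhr i} : i < |s|\rangle$ lies in $V$, so the $\mu^+$-closure of $\Q$ in $V$ supplies a lower bound --- this is precisely why the recursion must happen in $V$.

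Let $Z = \{\alpha_s : s \in 2^{<\mu^+}\} \in V$. The tree has at most $\sum_{\beta < \mu^+} 2^\beta = 2^\mu$ nodes, so $|Z| \leq 2^\mu$, and since $\nu$ is a strong limit with $\mu < \nu$, $|Z| < \nu$. By the $\nu^+$-approximation hypothesis, $(p^*, q^*) \Vdash \dot B \cap \check Z \in V[\dot G_\PP]$. For each $s \in 2^\mu$, extend $(p^*, q_s)$ to some $(p_s, q'_s)$ deciding $\dot B \cap \check Z = \dot x_s$ for a $\PP$-name $\dot x_s$. Distinct $s, s' \in 2^\mu$ first differ at some $i < \mu$, where the construction ensures $\alpha_{s \uhr i}$ is forced into $\dot B$ on one branch and out on the other, so the interpretations $x_s$ in $V[G]$ are pairwise distinct, giving $2^\mu$ distinct members of $\mathcal{P}(Z)^{V[G]}$.

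The main obstacle is extracting the contradiction from this array of distinct approximations. Here the small size of $\PP$ enters decisively: every subset of $Z$ in $V[G]$ is named by some nice $\PP$-name in $V$, and the count of nice $\PP$-names for subsets of $Z$ is at most $(2^{|\PP|})^{|Z|} \leq 2^{\mu \cdot 2^\mu} = 2^{2^\mu}$, which remains below $\nu$ by strong-limitness. To finish, I would follow the detailed bookkeeping of \cite[Lemma 2.20]{adkisson:ITP}, combining the $\mu^+$-cc of $\PP$, a pigeonhole step on the $p_s$'s (stabilizing the $\PP$-component on a dense subfamily of $2^\mu$), and the strong-limit structure of $\nu$, to force an overdetermination of $\dot B$ below $(p^*, q^*)$ that contradicts $(p^*, q^*) \Vdash \dot B \notin V[\dot G_\PP]$. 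This closing combinatorial step is the technical heart of the argument; the tree construction and cardinal arithmetic leading up to it are routine.
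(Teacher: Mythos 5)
The paper does not actually prove this lemma --- it is quoted verbatim from \cite[Lemma 2.20]{adkisson:ITP} --- so your proposal has to stand on its own, and it has two genuine gaps. First, the splitting step is not justified. You freeze the $\PP$-coordinate at $p^*$ and claim that if no $\alpha$ and $q_0,q_1\le q_s$ satisfy $(p^*,q_0)\forces\alpha\in\dot B$ and $(p^*,q_1)\forces\alpha\notin\dot B$, then $(p^*,q_s)\forces\dot B=\check x$ for some $x\in V$. That implication is false: the negation of your splitting property only says that for each $\alpha$ at most one of the two truth values is forced by a condition whose $\PP$-part is exactly $p^*$; it is entirely possible that \emph{neither} is forced, because deciding ``$\alpha\in\dot B$'' may require strengthening the $\PP$-coordinate. (For instance, if $p_0,p_1$ are incompatible extensions of $p^*$ and $\dot B$ is named so as to equal the $\Q$-generic object below $p_0$ and its complement below $p_1$, then no condition of the form $(p^*,q)$ decides any instance of ``$\alpha\in\dot B$'', yet $(p^*,q^*)\forces\dot B\notin V[\dot G_\PP]$.) Any correct version of the splitting claim must let the $\PP$-part vary (and then recover uniformity later using $|\PP|\le\mu$), and this interaction is precisely what makes the lemma nontrivial.

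Second, the closing count does not close, and it is not mere bookkeeping. Your binary tree of height $\mu^+$ yields at most $2^\mu$ branches and a set $Z$ with $|Z|\le 2^\mu$, hence $2^\mu$ pairwise distinct elements of $\mc{P}(Z)^{V[G]}$; but by your own estimate there are up to $2^{2^\mu}$ candidates, so the inequality points the wrong way and no pigeonhole contradiction is available from this configuration. The standard Magidor--Shelah-style argument gets around this by exploiting the singularity of $\nu$: one builds a splitting tree of height $\tau=\cf(\nu)$ whose splitting widths are cofinal in $\nu$, so that the number of branches exceeds $\nu$ while $|Z|\le\nu$, and then compares against a family of candidate values of $\dot B\cap Z$ of size at most $\nu$ (this is where the approximation hypothesis, rather than raw cardinal arithmetic on $\mc{P}(Z)$, does the work). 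Producing that wide splitting from mere $\mu^+$-closure is itself the delicate point. Deferring ``the technical heart of the argument'' to the cited lemma means the proposal does not contain a proof; worse, the setup you have built (binary splitting, frozen $p^*$, counting all nice names for subsets of $Z$) is not one from which that heart can be transplanted without redoing the construction.
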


Finally, our argument will make crucial use of the following standard absorption lemma.
\begin{lemma}\label{lem:colabsorption}\cite[Theorem 14.3]{cummings:handbook}
	Let $\ka$ be an inaccessible cardinal, and let $\delta < \ka$ be regular. Let $\PP$ be a $\delta$-closed forcing poset with $|\PP| < \ka$. Then there is a forcing projection from $\Coll(\delta, <\ka)$ to $\PP$ whose quotient is $\Coll(\delta, <\ka)$.
\end{lemma}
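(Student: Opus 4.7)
The plan is to prove the lemma by showing that $\Coll(\delta, <\ka)$ is forcing-equivalent to the product $\PP \times \Coll(\delta, <\ka)$. Once that equivalence is established, projection onto the first coordinate is a forcing projection $\Coll(\delta, <\ka) \to \PP$ whose quotient is $\Coll(\delta, <\ka)$, as required.

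The first step is to choose, using the inaccessibility of $\ka$, a cardinal $\mu$ with $|\PP| \leq \mu < \ka$ and $\mu^{<\delta} = \mu$. With this choice, $\Coll(\delta,\mu)$ has cardinality $\mu$, is $\delta$-closed, and generically collapses $\mu$ to $\delta$. I would then produce a regular embedding $e\colon \PP \hookrightarrow \Coll(\delta,\mu)$ by a back-and-forth recursion of length at most $\mu$: enumerate the conditions of $\PP$ and recursively assign images in $\Coll(\delta,\mu)$, using its $\delta$-closure together with its abundant supply of antichains of every size below $\mu$ to preserve meets and incompatibility.

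The second and more delicate step is to verify that in $V[G_\PP]$ the quotient forcing $\Coll(\delta,\mu)/e[G_\PP]$ is forcing-equivalent to $\Coll(\delta,\mu)$ as computed in $V[G_\PP]$. The tool here is the standard uniqueness principle: any $\delta$-closed, separative, atomless forcing of cardinality $\mu$ that collapses $\mu$ to $\delta$ is forcing-equivalent to $\Coll(\delta,\mu)$. The work lies in checking that the quotient retains these properties in $V[G_\PP]$—that it still has cardinality $\mu$ (since $|\PP| \leq \mu$), still collapses $\mu$ to $\delta$ (since the full forcing does and $\PP$ by itself cannot, as $|\PP| < \mu$), and remains $\delta$-closed, atomless, and separative. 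The $\delta$-closure of the quotient is immediate since $\PP$ is $\delta$-closed and $\delta$-closure is preserved by quotients by regular suborders in the forcing extension.

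Finally, I would stitch everything together with the standard factorization
\[\Coll(\delta,<\ka) \cong \Coll(\delta,\mu) \times \Coll(\delta,[\mu^+, \ka)) \cong \Coll(\delta,\mu) \times \Coll(\delta,<\ka),\]
where the last isomorphism uses the inaccessibility of $\ka$ to reindex the coordinates of the latter full-support product. Combining this with the equivalence $\Coll(\delta,\mu) \sim \PP \times \Coll(\delta,\mu)$ extracted from steps one and two yields $\Coll(\delta,<\ka) \sim \PP \times \Coll(\delta,<\ka)$, and the projection to $\PP$ has quotient $\Coll(\delta,<\ka)$. The main obstacle I expect is step two: recognizing the quotient of a Levy collapse by a regularly embedded $\delta$-closed suborder as a Levy collapse again requires a careful analysis in $V[G_\PP]$, verifying that the quotient's cardinality is not dropped below $\mu$, that $\mu$ is genuinely collapsed by the quotient rather than already collapsed by $\PP$, and that sufficient homogeneity is preserved for the uniqueness principle to apply.
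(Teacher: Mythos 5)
This lemma is not proved in the paper at all; it is quoted verbatim from Cummings' handbook chapter (Theorem 14.3), so the only fair comparison is with the standard proof given there. Your overall plan --- absorb $\PP$ into a single collapse $\Coll(\delta,\mu)$ for a suitable $\mu$ with $|\PP|\leq\mu<\ka$ and $\mu^{<\delta}=\mu$, then use the factorization of $\Coll(\delta,<\ka)$ across the coordinate $\mu$ --- is exactly the standard strategy, and your first and third steps are essentially right (modulo the cosmetic point that $\Coll(\delta,[\mu^+,\ka))$ is only forcing-equivalent, not literally isomorphic by reindexing, to $\Coll(\delta,<\ka)$, and the unstated separativity hypothesis, both of which are harmless).

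The genuine gap is in your second step. The assertion that ``$\delta$-closure is preserved by quotients by regular suborders'' is false: Kunen's classical example of a countably closed forcing $\PP$ adding a Suslin tree $\dot T$, with $\Q=\PP*\dot T$ countably closed and $\PP$ a regular suborder of $\Q$, has quotient $\dot T$, a ccc nontrivial forcing that is nowhere countably closed. So for an arbitrary regular embedding $e\colon\PP\to\Coll(\delta,\mu)$ you have no right to conclude that $\Coll(\delta,\mu)/e[G_\PP]$ is $\delta$-closed, and without that (or at least a dense $\delta$-closed subset, or $\delta$-strategic closure) the uniqueness principle you invoke does not apply; this is precisely the point where your argument would stall. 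The standard proof sidesteps the quotient analysis entirely: one applies the uniqueness principle not to a quotient but to the product $\PP\times\Coll(\delta,\mu)$, which is manifestly $\delta$-closed, separative, of size $\mu$, and collapses $\mu$ to $\delta$, hence is forcing-equivalent to $\Coll(\delta,\mu)$. That equivalence already exhibits the desired projection (onto the first factor) with quotient literally equal to the second factor, and then your step three finishes the argument. If you insist on the regular-embedding route, you must either build $e$ so that its quotient visibly contains a dense $\delta$-closed set --- which in practice amounts to constructing the dense embedding of the product anyway --- or prove the closure of the quotient by hand; it is not automatic.
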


\section{The Tree Property at Countably Many Cofinalities}\label{section:TP}
First we will show that the tree property can be forced to hold at $\aleph_{\w+\w+1}$ and at each $\aleph_{\w_n+1}$ simultaneously for $0<n<\w$. To obtain the tree property at each $\aleph_{\w_n+1}$ using Lemma \ref{lem:neemantp}, we need to choose a singular cardinal $\mu_n$ whose successor will become $\w_{n+1}$ in the final model. The cofinality of $\mu_n$ needs to be the cardinal that will become $\w_{n}$ in final model; that is, $\cf(\mu_n) = \mu_{n-1}^+$, and $\mu_n$ will be collapsed to $\mu_{n-1}^+$. Once all of these collapses are complete, $\aleph_{\w}$ will be the supremum of the sequence $\langle \mu_n^+\mid n < \w\rangle$. Since each $\mu_n^+$ is the successor of a singular, there is no reason to expect the tree property to hold at $\aleph_{\w+1}$ in this construction; we settle for the next best cardinal, $\aleph_{\w+\w+1}$.

The primary difficulty is in selecting the sequence $\langle \mu_i \mid i<\w\rangle$. If we choose them one at a time, we run into difficulties. In particular, when selecting each $\mu_i$ we need complete knowledge of the poset (so all $\mu_j$ for $j\neq i$) to ensure that we will have the tree property at the appropriate cardinals. If we select a different $\mu_i$ for every possible tail of the sequence, we are unable to combine all of our choices at the limit stage, since we will never make a selection that does not depend on other information. 
Working within these constraints, we can obtain some partial results as in \cite{adkisson:ITP}, but those techniques only work for finitely many cofinalities.

Our solution is to obtain the tree property in a larger poset that does not depend on any future choices. This poset will project down to our target model, regardless of the selections we make, and the quotient of this projection will not add branches through trees. This approach is loosely inspired by the arguments of Golshani and Hayut in \cite{golshani-hayut:tpcountablesegment}, although the details of the construction are quite different.

\begin{theorem}
	Let $\langle \ka_\alpha \mid \alpha < \ka_0\rangle$ be an continuous increasing sequence of cardinals with the following properties:
	\begin{itemize}
			\item $\ka_{\alpha+1} = \ka_\alpha^+$ for all limit ordinals $\alpha$
			\item $\ka_n$ is indestructibly supercompact for all $n<\w$
			\item $\ka_{\alpha+2}$ is indestructibly supercompact for all $\w\leq \alpha < \ka_0$.
		\end{itemize}

	Then there is a generic extension in which the tree property holds at $\aleph_{\w+\w+1}$ and at $\aleph_{\w_n+1}$ for all $0<n<\w$.
\end{theorem}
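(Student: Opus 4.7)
Following the strategy sketched in the section's preamble, the idea is to construct a single ``large'' forcing $\MM \in V$ whose definition does not commit to any particular sequence $\langle \mu_n \mid n < \w\rangle$, and to establish the tree property in $V[\MM]$ at every cardinal that will serve as $\aleph_{\w_n+1}$ for some $n$, as well as at $\aleph_{\w+\w+1}$. A target model $V[G_\PP]$ having the correct $\aleph$-indexing is then obtained by projecting $\MM$ onto a specific quotient $\PP$ in $V$, chosen (via Lemma~\ref{lem:colabsorption}) to absorb the cofinality-determining collapses; the tree property descends to $V[G_\PP]$ because the quotient $\MM/G_\PP$ admits a (closed)$\times$(small) factorization relative to each target cardinal, hence adds no cofinal branch through any thin list there by Lemmas~\ref{lem:branchapprox} and~\ref{lem:cc+closed=approx}.

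\textbf{Construction of $\MM$ and application of Lemma~\ref{lem:strtp}.} I would build $\MM$ as an Easton-style product of Mitchell-type blocks $\MM_n$, one for each $n < \w$ and one further block $\MM_*$ handling $\aleph_{\w+\w+1}$. Each $\MM_n$ is itself a ``symmetric'' product of candidate factors $\LL_\mu$, indexed over a large set $I_n \subseteq \ka_n$ of potential singularization points $\mu$, where each $\LL_\mu$ decomposes as $\PP_\mu \times \Q_\mu$ with $|\PP_\mu| < \mu^+$ (a small Cohen-style factor at $\mu$) and $\Q_\mu$ being $\mu^+$-closed (a collapse of the interval $(\mu, \ka_n)$), exactly matching hypothesis (1) of Lemma~\ref{lem:strtp}; the block $\MM_*$ is analogous, built above $\ka_\w$ using the indestructibly supercompact $\ka_{\w+2}$ and the regulars above it. For a fixed $n$, factor $\MM = \MM_{<n} * \dot{\MM}_n * \dot{\MM}_{>n}$ and work in $V[G_{<n}]$: hypothesis (2) of Lemma~\ref{lem:strtp} follows from the indestructible $\nu^+$-supercompactness of $\ka_n$ together with a standard reflection argument arranging $\nu \in i(I_n)$, and hypothesis (3) is supplied by the indestructibly supercompact $\ka_{\alpha+2}$'s above $\w$, whose generic $\nu^+$-supercompactness embeddings are produced by an auxiliary Laver-style forcing with sufficiently distributive full-support power. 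The lemma then yields some $\mu_n \in I_n$ for which the tree property at $\nu^+ = \mu_n^+$ holds in the extension by a single factor $\LL_{\mu_n}$. Lemmas~\ref{lem:branchapprox} and~\ref{lem:cc+closed=approx}, applied to the remaining pieces (the other $\LL_\mu$'s in $\MM_n$, the tail $\MM_{>n}$, and $\MM_*$), then propagate the tree property up to $V[\MM]$.

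\textbf{Main obstacle.} The principal difficulty is verifying hypothesis (3) of Lemma~\ref{lem:strtp} simultaneously at every stage $n$: after forcing with $\MM_{<n} * \LL_{\mu_n}$ one must produce, at each cardinal $\ka_{\rho+2}$ occurring in the chosen witness sequence, a generic $\nu^+$-supercompactness embedding whose source forcing has the required $<\ka_\rho$-distributivity in full-support powers, and this must be done in a way that is compatible with the parallel structure of the remaining blocks. This forces a delicate stage-by-stage analysis of how the Mitchell-type product factors through each lifting argument. A related subtlety is the $\aleph_{\w+\w+1}$ block: since the intended final cofinality of $\aleph_{\w+\w}$ is $\w$, Lemma~\ref{lem:strtp} cannot be applied with $\tau = \w$ directly (as $\tau$ must be regular), so one applies it in an intermediate model where the relevant $\tau$ is still uncountable and then verifies, using the approximation-property tools from Section~\ref{section:lemmas}, that the tail collapse of $\tau$ to $\w$ does not add cofinal branches through any thin $\nu^+$-list.
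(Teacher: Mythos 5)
Your high-level slogan --- build a forcing that does not commit to future choices, project onto the target model, and use a small-quotient approximation argument to pull branches down --- is indeed the paper's strategy, but your implementation has two essential errors. First, you have misplaced the singular cardinals. You index the candidate singularization points by sets $I_n \subseteq \ka_n$, collapse the interval $(\mu,\ka_n)$, and assert that Lemma \ref{lem:strtp} gives the tree property at ``$\nu^+ = \mu_n^+$''. In that lemma $\nu$ is the supremum of the sequence $\langle\ka_\rho\mid\rho<\tau\rangle$ of large cardinals, while $\mu \in I\subseteq\ka_0$ lies below all of them, so $\nu^+$ is very far from $\mu^+$: the lemma produces the tree property at the successor of a singular \emph{limit of the $\ka_\rho$'s}, and the role of $\mu$ is only to fix the cofinality. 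Your architecture, with one block per $n<\w$ living inside $\ka_n$, can only manufacture singulars of countable cofinality (there are only countably many $\ka_n$ below $\ka_\w$), so it cannot produce $\aleph_{\w_n}$ of cofinality $\w_n$ for $n\geq 1$. The correct setup keeps every $\mu_i$ below $\ka_0$, lets $\LL_s$ collapse so that $\mu_{i-1}^+$ becomes $\w_i$, and obtains the tree property at $\nu_i^+$ where $\nu_i = \sup_{\rho<\mu_{i-1}^+}\ka_\rho$ is a limit of $\mu_{i-1}^+$-many members of the long sequence $\langle\ka_\alpha\mid\alpha<\ka_0\rangle$; this is exactly why the hypothesis supplies supercompacts cofinally in $\ka_0$ rather than just the first $\w$ of them.

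Second, your plan to prove the tree property in the extension by a single factor $\LL_{\mu_n}$ and then ``propagate it up'' to $V[\MM]$ via Lemmas \ref{lem:branchapprox} and \ref{lem:cc+closed=approx} runs the approximation machinery in the wrong direction: those lemmas pull branches \emph{down} from a larger model to a smaller one; they do not show that further forcing preserves the tree property (a tree in $V[\MM]$ need not exist in the single-factor extension). The paper avoids this by first forcing with a fixed preparation $\HH$ (which secures hypotheses (2) and (3) of Lemma \ref{lem:strtp} once and for all via indestructibility, so no stage-by-stage lifting analysis is needed), and then defining at each stage $k$ a single auxiliary poset $\LL_k^*(\mu_k)$ whose closed part is $\Coll(\mu_k^+,<\ka_0)\times\prod_{\mu_k<\alpha<\ka_0}\Coll(\alpha^+,<\ka_0)$ with $\mu_k^{++}$-support. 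Lemma \ref{lem:strtp} is applied directly to this family to choose $\mu_k^*$, and Lemma \ref{lem:colabsorption} plus restriction to one coordinate shows that $\LL_k^*(\mu_k^*)$ projects onto $\LL_s$ for \emph{every} completion $s$ of the chosen initial segment, with quotient of size $<\nu_k^+$; trees are upward absolute into the bigger model, the branch is found there, and Corollary \ref{cor:smallbranch} pulls it down. Finally, your worry about the $\aleph_{\w+\w+1}$ block is spurious: the relevant $\tau$ there is $\w$, which is regular, and that case is handled as the base case with $\nu_0=\ka_0^{+\w}$, requiring no auxiliary collapse of cofinalities.
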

\begin{proof}
	Define the poset $\HH$ as follows:
	\[\HH\defeq \left(\prod_{n<\w} \Coll(\ka_n, <\ka_{n+1})\right) \times \left( \prod_{\w\leq\rho<\tau} \Coll(\ka_{\rho+1}, <\ka_{\rho+2})\right).\] Let $H$ be generic for $\HH$, and work in $V[H]$. Note that in $V[H]$, requirements (2) and (3) of Lemma \ref{lem:neemantp} are satisfied.
	
	Let $I$ be the collection of all strictly increasing sequences $\langle \mu_i \mid i<\w\rangle$, where $\mu_i <\ka_0$ is a singular cardinal of cofinality $\mu_{i-1}^+$. For each sequence $s = \langle \mu_i \mid i<\w\rangle \in I$, define 
	\[\LL_s \defeq \Coll(\w, \mu_0) \times \left(\prod_{n<\w} \Coll(\mu_{n}^+, \mu_{n+1})\right)\times \Coll\left((\sup_{n<\w}\mu_n)^+, <\ka_0\right).\]
	
	Given such a sequence $s$, for all $i$ with $0<i<\w$, define $\nu_i(s) \defeq \sup_{\rho < \mu_{i-1}^+} \ka_\rho$. Let $\nu_0(s) = \ka_0^{+\w}$. In the extension of $V[H]$ by $\LL_s$, $\nu_i(s)$ will become $\aleph_{\w+\w_i}$. Note that $\nu_i(s)$ only depends on the initial segment $\langle \nu_n \mid n<i\rangle$; when these values are fixed, as they will be throughout our argument, we will usually omit the parameter $s$.
	
	Our goal is to inductively build a sequence $s = \langle \mu^*_k \mid k<\w\rangle$ such that in the extension of $V[H]$ by a generic for $\LL_s$, the tree property will hold at every $\nu_{i}^+(s)$. We will do so inductively, selecting each $\mu_k^*$ based on the previous choices.

	\begin{lemma}\label{lem:indtp}
		Let $k < \w$, and fix $\langle \mu^*_i \mid i<k\rangle$. There exists $\mu_k^*$ such that for all sequences $s = \langle \mu_i \mid i<\w\rangle$ with $\mu_i = \mu_i^*$ for $i \leq k$, the tree property holds at $\nu_k^+(s)$ in $V[H][L_s]$.
	\end{lemma}
	\begin{proof}
		If $k = 0$, for all $\mu_0 <\ka_0$, we set
		\[\PP_0^*(\mu_0) \defeq \Coll(\w, \mu_0).\]
		Otherwise, for all $\mu_i$ with $\mu^*_i < \mu_i < \ka_0$, define
		\[\PP^*_k(\mu_k) \defeq \Coll(\w, \mu_0^*) \times \left(\prod_{n<k-1}\Coll((\mu^*_n)^+, \mu^*_{n+1})\right)\times \Coll((\mu^*_{k-1})^+, \mu_k).\]
		Define
		\[\Q^*_k(\mu_k) \defeq \Coll(\mu_k^+, <\ka_0) \times \left(\prod_{\mu_k < \alpha < \ka_0} \Coll(\alpha^+, <\ka_0)\right),\]
		where the product has $\mu_k^{++}$-support. Finally, we define
		\[\LL^*_k(\mu_k) \defeq \PP^*_k(\mu_k) \times \Q^*_k(\mu_k).\]
		Note that $|\PP^*_{k}(\mu_k)| = \mu_k$, and $\Q^*_k(\mu_k)$ is $\mu_k^+$-closed; $|\LL^*_k(\mu_k)| < \ka_0^{++} = \ka_2$. Thus it meets the first hypothesis of Lemma \ref{lem:neemantp}.
		
		We apply Lemma \ref{lem:neemantp} to conclude that there is some $\mu^*_k$ such that in the extension of $V[H]$ by a generic $L$ for $\LL^*_k(\mu^*_k)$, the tree property holds at $\nu_k^+$.
		
		Now, let $s = \langle \mu_i \mid i<\w\rangle$ with $\mu_n = \mu_n^*$ for $n \leq k$. In $V[H]$, let $\dot{T}$ be a $\LL_s$-name for a $\nu_k^+$-tree.
		
		We wish to show that $\LL^*_k(\mu^*_k)$ projects to $\LL_s$. Note that $\LL_s = \PP_s \times \Q_s$, where
		\[\PP_s = \Coll(\w, \mu_0) \times \left(\prod_{n<k} \Coll(\mu_{n}^+, \mu_{n+1})\right)\]
		and
		\[\Q_s = \left(\prod_{k\leq n<\w} \Coll(\mu_{n}^+, \mu_{i+1})\right)\times \Coll\left((\sup_{i<\w}\mu_i)^+, <\ka_0\right).\]
		Note that since $\mu^*_n = \mu_n$ for $n \leq k$, $\PP_s = \PP^*_k(\mu^*_k)$.
		
		By Lemma \ref{lem:colabsorption}, noting that $\prod_{k<n<\w} \Coll(\mu_{n}^+, \mu_{i+1})$ is $(\mu^*_k)^+$-closed and has size less than $\ka_0$, the first component of $\Q^*_k(\mu^*_k)$ projects onto the first component of $\Q_s$. The second component of $\Q_k^*(\mu^*_k)$ projects onto the second component of $\Q_s$ by restricting to the coordinate indexed by $\alpha = \sup_{i<\w}\mu_i$.
		
		Let $L_s$ be generic for $\LL_s$ over $V[H]$, and let $L_k^*(\mu_k^*)$ be a generic for $\LL^*_k(\mu^*_k)$ over $V[H]$ projecting to $L_s$. Let $T$ be the interpretation of $\dot{T}$ by this generic. Then $T \in V[H][L_s]$. Being a $\nu_k^+$-tree is upwards absolute, so $T$ remains a tree in $V[H][L^*_k(\mu^*_k)]$. As before, since the tree property holds in this model, there must be a cofinal branch through $T$ in $V[H][L^*_k(\mu^*_k)]$. The quotient has size $<\nu_k^+$, so it cannot have added the branch. We conclude that $T$ must have a cofinal branch in $V[H][L_s]$.
	\end{proof}

	Let $s = \langle \mu_i^* \mid i < \w\rangle$. Let $L_s$ be generic for $\LL_s$. Then for each $i < \w$, by Lemma \ref{lem:indtp}, we conclude that in $V[H][L_s]$, the tree property holds at $\nu_i^+(s)$. In this extension, $\nu_i^+(s) = \aleph_{\w+\w_i+1}$, so we have the desired result.
\end{proof}

\section{The Strong Tree Property at Countably Many Cofinalities.}\label{section:sTP}
In this section we extend our construction to force the strong tree property at $\aleph_{\w+\w+1}$ and at each $\aleph_{\w_n+1}$ simultaneously for $n<\w$. When we attempt to apply the techniques of the previous section, we run into a new obstacle: thin $\mc{P}_{\nu^+}(\la)$-lists are not necessarily upwards absolute. When we build our larger model where the strong tree property will hold, we add new small subsets of $\la$. The list we wish to find a branch through may not be a $\mc{P}_\ka(\la)$-list in the larger model, so we can't apply the strong tree property to find the branch. To get around this, we use an auxiliary forcing that collapses $\la$ to $\nu^+$. This lets us work with a single cardinal parameter, reducing the problem to finding a branch through a $\nu^+$-list. Since thin $\nu^+$-lists are upwards absolute, we can use the techniques of the previous section.

\begin{theorem}
	Let $\langle \ka_\alpha \mid \alpha < \ka_0\rangle$ be an continuous increasing sequence of cardinals with the following properties:
	\begin{itemize}
		\item $\ka_{\alpha+1} = \ka_\alpha^+$ for all limit ordinals $\alpha$
		\item $\ka_n$ is indestructibly supercompact for all $n<\w$
		\item $\ka_{\alpha+2}$ is indestructibly supercompact for all $\w\leq \alpha < \ka_0$.
	\end{itemize}
	
	Then there is a generic extension in which the strong tree property holds at $\aleph_{\w+\w+1}$ and at $\aleph_{\w_n+1}$ for all $0<n<\w$.
\end{theorem}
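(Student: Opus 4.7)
The plan is to run the entire framework of Section \ref{section:TP} unchanged---the same generic $H$ for $\HH$, the same index set $I$, the same posets $\LL_s$ and $\LL_k^*(\mu_k)$, and the same inductive selection of $\mu_k^*$ via Lemma \ref{lem:strtp}---and to upgrade the conclusion from ``tree property at each $\nu_k^+(s)$'' to ``strong tree property at each $\nu_k^+(s)$'' by reducing $\mc{P}_{\nu_k^+}(\la)$-lists to $\nu_k^+$-lists through a $\nu_k^+$-closed collapse of $\la$. The inductive selection of $\mu_k^*$ is taken verbatim from Section \ref{section:TP}, so Lemma \ref{lem:strtp} again produces $\mu_k^*$ such that in $V[H][L_k^*(\mu_k^*)]$ every thin $\nu_k^+$-list has a cofinal branch.

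Given $s$ extending the chosen initial segment, $\la > \nu_k^+$, and a thin $\mc{P}_{\nu_k^+}(\la)$-list $d$ in $V[H][L_s]$, I will set $R \defeq \Coll(\nu_k^+, \la)$ (computed in $V[H][L_s]$), force with $R$ to obtain a generic bijection $g\colon \nu_k^+ \to \la$, and define
\[
d_\alpha^* \defeq g^{-1}[d_{g[\alpha]}] \subseteq \alpha.
\]
Because $R$ is $\nu_k^+$-closed, every ${<}\nu_k^+$-sized set of ordinals it adds already lies in $V[H][L_s]$, so each $g[\alpha]$ is in $(\mc{P}_{\nu_k^+}(\la))^{V[H][L_s]}$ and $d_{g[\alpha]}$ is defined; thinness of $d^*$ then follows from the injection $x \mapsto g[x]$ of the $\alpha$-th level of $d^*$ into the $g[\alpha]$-th level of $d$. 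A short calculation shows that if $b^*$ is a cofinal branch through $d^*$ then $b \defeq g[b^*]$ is a cofinal branch through $d$: for $z \in (\mc{P}_{\nu_k^+}(\la))^{V[H][L_s]}$, pick $\alpha$ with $g^{-1}[z] \subseteq \alpha$ and $\beta > \alpha$ with $b^* \cap \alpha = d_\beta^* \cap \alpha$, giving $b \cap z = d_{g[\beta]} \cap z \in L_z$.

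To produce $b^*$ and descend back to $V[H][L_s]$, I will mimic the argument of Lemma \ref{lem:indtp} in the presence of $R$. The projection $\LL_k^*(\mu_k^*) \to \LL_s$ lifts to a projection $\LL_k^*(\mu_k^*) \times R \to \LL_s \times R$ with the same quotient of size ${<}\nu_k^+$; since thin $\nu_k^+$-lists are upward absolute, $d^*$ remains a thin $\nu_k^+$-list in $V[H][L_k^*(\mu_k^*) \times R]$. Combining the tree property from Lemma \ref{lem:strtp} in $V[H][L_k^*(\mu_k^*)]$ with the $\nu_k^+$-closure of $R$ should produce a cofinal branch $b^*$ through $d^*$ in the joint extension; Corollary \ref{cor:smallbranch} descends $b^*$ to $V[H][L_s][R]$, and transport through $g$ gives $b$ through $d$ in $V[H][L_s][R]$. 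Finally, Lemma \ref{lem:branchapprox} makes $b$ thinly $\nu_k^+$-approximated by $R$ over $V[H][L_s]$, and Lemma \ref{lem:cc+closed=approx}---applied with $\Q = R$ and $\PP = \LL_s$, using $\cf(\nu_k) = (\mu_{k-1}^*)^+ \leq \ka_0 < \nu_k$ and $|\LL_s| \leq \ka_0$---grants $R$ the $\nu_k^+$-approximation property over $V[H][L_s]$, placing $b$ itself in $V[H][L_s]$.

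The hard part will be the step that produces $b^*$ in $V[H][L_k^*(\mu_k^*) \times R]$: although the tree property at $\nu_k^+$ holds in $V[H][L_k^*(\mu_k^*)]$, it is not automatic that it persists after the $\nu_k^+$-closed auxiliary collapse $R$, because $d^*$ is a new $\nu_k^+$-list not present in $V[H][L_k^*(\mu_k^*)]$. The resolution must leverage the $\nu_k^+$-closure of $R$ together with the approximation machinery of Section \ref{section:lemmas} to show that the branch-extraction provided by Lemma \ref{lem:strtp} survives the extra collapse---this is precisely the ``thin-list upward-absoluteness'' issue motivating the entire section. Once it is handled, the induction on $k$ closes exactly as in Section \ref{section:TP}, yielding the strong tree property at $\aleph_{\w+\w+1}$ and at each $\aleph_{\w_n+1}$ for $0 < n < \w$ in $V[H][L_s]$.
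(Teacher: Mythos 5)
Your overall architecture is right---reduce a thin $\mc{P}_{\nu_k^+}(\la)$-list to a thin $\nu_k^+$-list via a $\nu_k^+$-closed collapse of $\la$, extract a branch in the large model, and descend it first through the small quotient (Corollary \ref{cor:smallbranch}) and then through the collapse (Lemma \ref{lem:cc+closed=approx})---and this matches the paper's strategy. But the step you yourself flag as ``the hard part'' is a genuine gap, and it is exactly the point where the paper's proof is organized differently. You select $\mu_k^*$ by applying Lemma \ref{lem:strtp} over $V[H]$, so the branch-extraction guarantee you obtain lives in $V[H][L_k^*(\mu_k^*)]$; but $d^*$ is built from the generic bijection $g$ and lies only in $V[H][L_s][R]$, hence in $V[H][L_k^*(\mu_k^*)][R]$ and not in $V[H][L_k^*(\mu_k^*)]$. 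The statement ``every thin $\nu_k^+$-list has a cofinal branch'' is a universal statement, and a $\nu_k^+$-closed forcing can add new thin $\nu_k^+$-lists: closure puts each individual entry $d^*_\alpha$ (a subset of $\alpha<\nu_k^+$) in the ground model, but the full $\nu_k^+$-length sequence need not be there, and your $d^*$ is precisely such a new list. No lemma in Section \ref{section:lemmas} gives preservation of this property under $\nu_k^+$-closed forcing, and hoping that ``closure together with the approximation machinery'' will supply it is not a proof.

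The paper avoids this preservation problem by reversing the order of operations: it forces with $K_\la$ generic for $\Coll(\nu_k^+,\la)^{V[H]}$ \emph{first}, checks that conditions (2) and (3) of Lemma \ref{lem:strtp} still hold in $V[H][K_\la]$, and only then applies Lemma \ref{lem:strtp} \emph{over $V[H][K_\la]$} to choose $\mu_k^*(\la)$ so that every thin $\nu_k^+$-list has a branch in $V[H][K_\la][L_k^*(\mu_k^*(\la))]$---a model that does contain $d^*$ once the projection $\LL_k^*(\mu_k^*(\la))\to\LL_s$ is in place. The price is that $\mu_k^*$ now depends on $\la$, which your version would not have to pay; the paper repairs this with a pigeonhole argument (only $\ka_0$ many possible values of $\mu_k^*(\la)$, so some fixed value recurs for unboundedly many $\la$) together with Fact \ref{fact:tpgoesdown}, which says that verifying the branch property for unboundedly many $\la$ suffices for the strong tree property. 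Your write-up is missing both of these ingredients because it assumed a $\la$-independent choice of $\mu_k^*$ was available. A secondary point: compute the auxiliary collapse in $V[H]$ rather than in $V[H][L_s]$, as $\LL_s$ can add new ${<}\nu_k^+$-sized partial functions and the two posets need not coincide; using the $V[H]$-version is what lets Lemma \ref{lem:cc+closed=approx} apply cleanly with $\Q=\Coll(\nu_k^+,\la)^{V[H]}$ and $\PP=\LL_s$, and Easton's Lemma then gives $\mc{P}_{\nu_k^+}(\la)^{V[H][L_s]}=\mc{P}_{\nu_k^+}(\la)^{V[H][L_s][K_\la]}$, which is what makes $d$ remain a thin list after the collapse.
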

\begin{proof}
	
	Define the poset $\HH$ as before:
	\[\HH\defeq \left(\prod_{n<\w} \Coll(\ka_n, <\ka_{n+1})\right) \times \left( \prod_{\w\leq\rho<\ka_0} \Coll(\ka_{\rho+1}, <\ka_{\rho+2})\right).\] Let $H$ be generic for $\HH$. As before, $V[H]$ meets the requirements given in Lemma \ref{lem:strtp}.
	
	Let $I$ be the collection of all strictly increasing sequences $\langle \mu_i \mid i<\w\rangle$, where $\mu_i <\ka_0$ is a singular cardinal of cofinality $\mu_{i-1}^+$. For each sequence $s = \langle \mu_i \mid i<\w\rangle \in I$, define 
	\[\LL_s \defeq \Coll(\w, \mu_0) \times \left(\prod_{n<\w} \Coll(\mu_{n}^+, \mu_{n+1})\right)\times \Coll\left((\sup_{n<\w}\mu_n)^+, <\ka_0\right).\]
	
	Given such a sequence $s$, for all $i$ with $0<i<\w$, define $\nu_i(s) \defeq \sup_{\rho < \mu_{i-1}^+} \ka_\rho$. As before, we will freely omit the parameter $s$. Let $\nu_0 = \ka_0^{+\w}$. In the extension of $V[H]$ by $\LL_s$, $\nu_i$ will become $\aleph_{\w+\w_i}$.
	
	As before, we inductively build a sequence $s$ such that in the extension of $V[H]$ by a generic for $\LL_s$, the strong tree property will hold at every $\nu_{i}^+$. To do this, we will need to make use of an auxiliary collapse.

	\begin{lemma}\label{lem:indstp}
		Let $k<\w$, and fix $\langle \mu^*_i \mid i<k\rangle$. There exists $\mu_k^*$ such that for all sequences $s = \langle \mu_i \mid i<\w\rangle$ with $\mu_i = \mu_i^*$ for $i \leq k$, the strong tree property holds at $\nu_k^+$ in $V[H][L_s]$.
	\end{lemma}
	\begin{proof}
		Fix $\la \geq \nu_k^+$. Let $K_\la$ be generic for $\Coll(\nu_k, \la)^{V[H]}$. Note that in $V[H][K_\la]$, $\la$ is collapsed to an ordinal with cardinality and cofinality $\nu_k^+$.
		
		If $k = 0$, for all $\mu_0 < \ka_0$, define
		\[\PP_0^*(\mu_0) \defeq \Coll(\w, \mu_0).\]
		Otherwise, for all $\mu_k$ with $\mu^*_{k-1} < \mu_k < \ka_0$, in $V[H]$ define
		\[\PP^*_k(\mu_k) \defeq \Coll(\w, \mu_0^*) \times \left(\prod_{n<k-1}\Coll((\mu^*_n)^+, \mu^*_{n+1})\right)\times \Coll((\mu^*_{k-1})^+, \mu_k).\]
		Finally, define
		\[\Q^*_k(\mu_k) \defeq \Coll(\mu_k^+, <\ka_0) \times \left(\prod_{\mu_k < \alpha < \ka_0} \Coll(\alpha^+, <\ka_0)\right),\]
		where the product has $\mu_k^{++}$-support, and
		\[\LL^*_k(\mu_k) \defeq \PP^*_k(\mu_k) \times \Q^*_k(\mu_k).\]
		Note that $|\PP^*_{k}(\mu_k)| = \mu_k$, $\Q^*_k(\mu_k)$ is $\mu_k^+$-closed, and $|\LL^*_k(\mu_k)| < \ka_0^{++} = \ka_2$. So $\LL^*_k(\mu_k)$ meets the hypotheses of Lemma \ref{lem:strtp}, applied in $V[H][K_\la]$.
		
		We apply Lemma \ref{lem:strtp} to conclude that there is some $\mu^*_k(\la)$ such that in the extension of $V[H][K_\la]$ by a generic $L$ for $\LL^*_k(\mu^*_k(\la))$, every thin $\nu_k^+$-list has a cofinal branch.
		
		In $V[H]$, let $s = \langle \mu_i \mid i<\w\rangle$ with $\mu_n = \mu_n^*$ for $n < k$ and $\mu_k = \mu_k^*(\la)$, and let $\dot{d}$ be a $\LL_s$-name for a thin $\mc{P}_{\nu_k^+}(\la)$-list. Let $L_s$ be generic for $\LL_s$. Once again, $\mc{P}_{\nu_k^+}(\la)^{V[H][K_\la]}=\mc{P}_{\nu_k^+}(\la)^{V[H]}$, and $\la$ is collapsed to $\nu_k^+$. By Fact\ref{fact:ka-lists2}, we can build a thin $\ka$-list $d^*$ such that $d'$ has a cofinal branch if $d^*$ does. Since $d'$ is isomorphic to $d$, we conclude that if $d^*$ has a cofinal branch, so does $d$.
		
		Working in $V[H][K_\la]$, we wish to show that $\LL^*_k(\mu^*_k(\la))$ projects to $\LL_s$. Note that $\LL_s = \PP_s \times \Q_s$, where
		\[\PP_s = \Coll(\w, \mu_0) \times \left(\prod_{n<k} \Coll(\mu_{n}^+, \mu_{n+1})\right)\]
		and
		\[\Q_s = \left(\prod_{k<n<\w} \Coll(\mu_{n}^+, \mu_{i+1})\right)\times \Coll\left((\sup_{i<\w}\mu_i)^+, <\ka_0\right).\]
		
		Note that since $\mu^*_n = \mu_n$ for $n < k$ and $\mu^*_k(\la) = \mu_k$, $\PP_{s} = \PP^*_k(\mu^*_k)$.
		By Lemma \ref{lem:colabsorption}, noting that $\prod_{k<n<\w} \Coll(\mu_{n}^+, \mu_{i+1})$ is $(\mu^*_k(\la))^+$-closed and has size less than $\ka_0$, the first component of $\Q^*_k(\mu^*_k(\la))$ projects onto the first component of $\Q_s$. The second component of $\Q_k^*(\mu^*_k(\la))$ projects onto the second component of $\Q_s$ by restricting to the coordinate indexed by $\alpha = \sup_{i<\w}\mu_i$.
		
		Let $L_k^*(\mu_k^*)$ be a generic for $\LL_k^*(\mu_k^*)$ projecting to $L_s$. Since the levels of $d^*$ are indexed by ordinals and $\nu^+_k$ is preserved, $d^*$ remains a thin $\nu_k^+$-tree in $V[H][L^*_k(\mu^*_k(\la))]$. Every thin $\nu_k^+$-list has a cofinal branch in this model, so there must be a cofinal branch through $d^*$ in $V[H][L^*_k(\mu_k(\la))]$. Since the quotient has size $<\nu_k^+$, it cannot have added the branch by Lemma \ref{lem:approxprop}, so $d^*$ must have a cofinal branch in $V[H][K_\la][L_s]$.
		
		Since we have found such a cofinal branch for $d^*$, we conclude that $d$ has a cofinal branch $b$ in $V[H][K_\la][L_s]$. Since $\LL_s$ has size less than $\nu$, and $\Coll(\nu^+, \la)^{V[H]}$ is $\nu^+$-closed in $V[H]$, applying Lemma \ref{lem:cc+closed=approx} we conclude that $\Coll(\nu^+, \la)^{V[H]}$ could not have added a branch through $d$ over $V[H][L_s]$.
		
		It follows that for any sequence $s\in I$ starting with $\langle \mu^*_n \mid n < k\rangle \cat \mu^*_k(\la)$, any thin $\mc{P}_{\nu_k^+(s)}(\la)$-list in $V[H][L_s]$ has a cofinal branch.
		
		Since there are at most $\ka_0$ many options for $\mu^*_k(\la)$, there is a fixed $\mu^*_k$ such that for unboundedly many $\la$, $\mu^*_k(\la) = \mu^*_k$. We conclude (using Fact \ref{fact:tpgoesdown}) that for any sequence $s\in I$ starting with with $\langle \mu^*_n \mid n < k\rangle \cat \mu^*_k(\la)$, the strong tree property holds at $\nu_k^+(s)$ in $V[H][L_s]$.
	\end{proof}
	
	Let $s = \langle \mu_i^* \mid i < \w\rangle$, and let $L_s$ be generic for $\LL_s$. As before, applying Lemma \ref{lem:indtp} inductively, we conclude that in $V[H][L_s]$, the strong tree property holds at $\nu_i^+ = \aleph_{\w+\w_i+1}$ for all $i<\w$.
\end{proof}

\section{Extending Further}\label{s:extending}

The arguments in the previous two sections are not limited to countable sequences. The primary restriction on the number of cofinalities is that the sequence of $\mu_i$'s needs to stay below $\ka_0$. In addition, if we are attempting to obtain the tree property at $\aleph_{\w_\alpha+1}$, our argument requires that $\aleph_{\w_\alpha} > \w_\alpha$. Since we need our target cofinalities to be regular, we avoid this problem by restricting to successor cardinals.

Note however that the more cofinalities we wish to include, the larger our starting point will be. If our desired cofinalities are the first $\tau$-many successor cardinals for some fixed $\tau < \ka_0$, we will select a new $\w_{\alpha+1}$ for all $\alpha < \tau$, and $\ka_0$ will be collapsed to a finite successor of $\aleph_{\tau}$. Then we will obtain the strong tree property at cardinals of the form $\aleph_{\tau+\w_\alpha+1}$. 

\begin{thm}
Let $\langle \ka_\alpha \mid \alpha < \ka_0\rangle$ be an continuous increasing sequence of cardinals with the following properties:
\begin{itemize}
	\item $\ka_{\alpha+1} = \ka_\alpha^+$ for all limit ordinals $\alpha$
	\item $\ka_n$ is indestructibly supercompact for all $n<\w$
	\item $\ka_{\alpha+2}$ is indestructibly supercompact for all $\w\leq \alpha < \ka_0$.
\end{itemize}

Fix $\tau < \ka_0$. Then there is a generic extension in which the strong tree property holds at $\aleph_{\tau+\w_\alpha+1}$ for all successor ordinals $\alpha < \tau$.
\end{thm}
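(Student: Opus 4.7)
The approach is to repeat the construction of Section \ref{section:sTP} with the countable sequence replaced by a transfinite sequence of length $\tau$. First I would define $\HH$ exactly as in the preceding proof and let $H$ be generic; then take $I$ to be the set of sequences $\langle \mu_\alpha \mid \alpha < \tau\rangle$ of cardinals below $\ka_0$ which are strictly increasing, have $\mu_{\alpha+1}$ singular of cofinality $\mu_\alpha^+$ at every successor $\alpha+1 < \tau$, and satisfy $\mu_\lambda = \sup_{\alpha<\lambda}\mu_\alpha$ at every limit $\lambda < \tau$. For $s \in I$ define $\LL_s$ analogously, using the product $\prod_{\alpha+1<\tau}\Coll(\mu_\alpha^+,\mu_{\alpha+1})$ in place of the countable product (with a support making the factors above any given level suitably closed), followed by the same final collapse of $\ka_0$ to $(\sup_\alpha\mu_\alpha)^+$. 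For every successor $\alpha+1<\tau$ set $\nu_{\alpha+1}(s)\defeq\sup_{\rho<\mu_\alpha^+}\ka_\rho$; in $V[H][L_s]$ this becomes $\aleph_{\tau+\w_{\alpha+1}}$, and $\nu_{\alpha+1}(s)^+=\aleph_{\tau+\w_{\alpha+1}+1}$.

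Then I would carry out a transfinite recursion of length $\tau$ to build $s^* = \langle \mu_\alpha^*\mid\alpha<\tau\rangle\in I$. At every limit stage $\lambda<\tau$, simply set $\mu_\lambda^*\defeq\sup_{\alpha<\lambda}\mu_\alpha^*$; no choice is required and nothing needs verification here. At each successor stage $\alpha+1$, with $\langle \mu_\beta^*\mid\beta\leq\alpha\rangle$ already fixed, I would mimic Lemma \ref{lem:indstp}: fix $\la\geq\nu_{\alpha+1}^+$, let $K_\la$ be generic over $V[H]$ collapsing $\la$ to $\nu_{\alpha+1}^+$, and in $V[H]$ define
\[\PP^*_{\alpha+1}(\mu)\defeq\Coll(\w,\mu_0^*)\times\left(\prod_{\beta+1\leq\alpha}\Coll((\mu_\beta^*)^+,\mu_{\beta+1}^*)\right)\times\Coll((\mu_\alpha^*)^+,\mu),\]
together with $\Q^*_{\alpha+1}(\mu)\defeq\Coll(\mu^+,<\ka_0)\times\prod_{\mu<\gamma<\ka_0}\Coll(\gamma^+,<\ka_0)$ and $\LL^*_{\alpha+1}(\mu)\defeq\PP^*_{\alpha+1}(\mu)\times\Q^*_{\alpha+1}(\mu)$. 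A size count shows $|\PP^*_{\alpha+1}(\mu)|=\mu$, $\Q^*_{\alpha+1}(\mu)$ is $\mu^+$-closed, and $|\LL^*_{\alpha+1}(\mu)|\leq\ka_1$ uniformly in $\alpha$, so Lemma \ref{lem:strtp} applies in $V[H][K_\la]$ and produces $\mu^*_{\alpha+1}(\la)$ witnessing its conclusion. Running the projection, quotient, and approximation steps of Lemma \ref{lem:indstp} then yields cofinal branches for every thin $\mc{P}_{\nu_{\alpha+1}^+}(\la)$-list in $V[H][L_s]$ whenever $s\in I$ extends $\langle\mu_\beta^*\mid\beta\leq\alpha\rangle\cat\mu^*_{\alpha+1}(\la)$. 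Stabilizing $\mu^*_{\alpha+1}(\la)$ on an unbounded set of $\la$ and invoking Fact \ref{fact:tpgoesdown} produces the desired $\mu_{\alpha+1}^*$.

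The main obstacle is verifying that the successor-stage argument still goes through when its predecessor index $\alpha$ is itself a limit, so that $\mu_\alpha^*$ is a supremum rather than a freshly chosen singular. In that situation the inner product in $\PP^*_{\alpha+1}(\mu)$ runs over $|\alpha|$ many nontrivial factors and can contribute size as large as $\mu_\alpha^*$, but since $\mu_\alpha^*<\mu_{\alpha+1}$ the bound $|\PP^*_{\alpha+1}|=\mu_{\alpha+1}$ still holds. Similarly, the factors of $\LL_s$ above level $\alpha$ remain $(\mu_\alpha^*)^+$-closed with total size below $\ka_0$, so Lemma \ref{lem:colabsorption} still absorbs them into the single collapse $\Coll((\mu_\alpha^*)^+,<\ka_0)$, and the projection from $\LL^*_{\alpha+1}$ onto $\LL_s$ is built exactly as in Lemma \ref{lem:indstp}. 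Once the recursion is complete and $L_{s^*}$ is generic for $\LL_{s^*}$, collecting the successor-stage conclusions gives the strong tree property at $\nu_\alpha(s^*)^+=\aleph_{\tau+\w_\alpha+1}$ for every successor ordinal $\alpha<\tau$, as required.
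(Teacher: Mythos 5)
Your proposal follows essentially the same route as the paper: the same poset $\HH$, the same family $I$ of continuous sequences with $\LL_s$ defined by the analogous product of collapses, suprema at limit stages, and the Lemma \ref{lem:indstp}-style choice of $\mu^*_{\alpha+1}(\la)$ followed by stabilization over unboundedly many $\la$ at successor stages. The construction and the key lemmas invoked match the paper's argument, so the proposal is correct and not a genuinely different proof.
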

\begin{proof}
	Our construction is as before.
	
	Define the poset $\HH$ as follows:
	\[\HH\defeq \left(\prod_{n<\w} \Coll(\ka_n, <\ka_{n+1})\right) \times \left( \prod_{\w\leq\rho<\ka_0} \Coll(\ka_{\rho+1}, <\ka_{\rho+2})\right).\]
	
	Let $I$ be the collection of all continuous strictly increasing sequences $\langle \mu_{\alpha} \mid \alpha < \tau\rangle$. For all sequences $s = \langle \mu_{\alpha} \mid \alpha < \tau\rangle\in I$, define:

	\[\LL_s \defeq \Coll(\w, \mu_0) \times \left( \prod_{\alpha < \tau } \Coll(\mu_{\alpha}^+, \mu_{\alpha+1})\right) \times \Coll((\sup_{\alpha < \tau} \mu_{\alpha+1})^+,<\ka_0).\]
	
	Note that after forcing with $\LL_s$, $\mu_\alpha^+$ will become $\aleph_{\w_\alpha+1}$.
	We inductively select $\mu^*_\alpha$ such that for any sequence $s\in I$ beginning with $\langle \mu^*_\beta \mid \beta \leq \alpha\rangle$, the strong tree property will hold at $\aleph_{\tau+\alpha+1}$ in $V[H][L_s]$.
	The base case is more or less identical to the base case in Lemma \ref{lem:indstp}, using 
	\[\LL^*_0(\mu_0) = \Coll(\w,\mu_0)\times \Coll(\mu_0^+, <\ka_0) \times \left(\prod_{\mu_0<\beta < \ka_0} \Coll(\beta^+, <\ka_0)\right).\]
	At limit stages $\gamma$, since the sequence must be continuous, we set $\mu^*_\gamma = \sup_{\alpha < \gamma} \mu^*_{\alpha}$.
	At successor stages, we follow the argument of Lemma \ref{lem:indstp}, using
	\[\PP^*_{\alpha+1}(\mu_{\alpha+1}) \defeq \Coll(\w, \mu_0^*)\times \left(\prod_{\beta < \alpha} \Coll((\mu^*_{\beta})^+, \mu^*_{\beta+1})\right)\times\Coll(\mu^*_{\alpha}, \mu_{\alpha+1}),\]
	\[\Q^*_{\alpha+1}(\mu_{\alpha+1}) \defeq \Coll(\mu_{\alpha+1}^+, <\ka_0) \times \left(\prod_{\mu_{\alpha+1} < \beta < \ka_0}\Coll(\beta^+, <\ka_0)\right),\]
	and
	\[\LL_{\alpha+1}^*(\mu_{\alpha+1})\defeq \PP^*_{\alpha+1}(\mu_{\alpha+1}) \times \Q^*_{\alpha+1}(\mu_{\alpha+1}).\]
	When this induction concludes, we will have a sequence $s\in I$ such that in $V[H][L_s]$, the strong tree property holds at $\aleph_{\tau+\w_{\alpha}+1}$ for all successor ordinals $\alpha < \tau$.
\end{proof}

\bibliography{bib}
\bibliographystyle{plain}

\end{document}